\definecolor{Gray}{gray}{0.9}                            
\theoremstyle{plain} 
\newtheorem{proposition}{Proposition}[section] 
\newtheorem{theorem}[proposition]{Theorem} 
\newtheorem{lemma}[proposition]{Lemma}
\theoremstyle{definition}
\newtheorem{example}[proposition]{Example} 
\theoremstyle{remark} 
\newtheorem{remark}[proposition]{Remark}
\newcommand{\PP}{{\mathbb{P}}}          
\newcommand{\GG}{{\mathbb{G}}}
\numberwithin{equation}{section}
\title[New examples of rational Gushel-Mukai fourfolds]{{New examples of rational Gushel-Mukai fourfolds}}
\author[M. Hoff]{Michael Hoff} 
\address{Universit\"at des Saarlandes, Campus E2 4, D-66123 Saarbr\"ucken, Germany}
\email{\href{mailto:hahn@math.uni-sb.de}{hahn@math.uni-sb.de}} 
\author[G. Staglian\`o]{Giovanni Staglian\`o} 
\address{Dipartimento di Matematica e Informatica, Universit\`{a} degli Studi di Catania, Viale A. Doria 5, 95125 Catania, Italy}
\email{\href{mailto:giovannistagliano@gmail.com}{giovannistagliano@gmail.com}} 
\date{\today} 
\begin{document}

\begin{abstract} 
We construct new examples of rational Gushel-Mukai fourfolds,
giving  more evidence for the analog of the Kuznetsov Conjecture for cubic fourfolds:
a Gushel--Mukai fourfold is rational if and only if it admits an \emph{associated K3 surface}.
\end{abstract}

\maketitle

\section*{Introduction} 
A Gushel-Mukai fourfold is a smooth prime Fano fourfold $X\subset \PP^8$ of degree $10$ and index $2$ (see \cite{mukai-biregularclassification}).
These fourfolds are parametrized by a coarse moduli space $\mathcal M_4^{GM}$ of dimension $24$ (see \cite[Theorem 5.15]{DK3}), and
the general fourfold $[X]\in \mathcal M_4^{GM}$ is a smooth quadratic section of a smooth hyperplane section of the Grassmannian $\mathbb{G}(1,4)\subset\PP^9$ of lines in $\PP^4$. 

In \cite{DIM} (see also \cite{DK1,DK2,DK3}), following Hassett's analysis of cubic fourfolds (see \cite{Hassett,Has00}), the authors 
studied Gushel-Mukai fourfolds via Hodge theory and the period map.
In particular, they showed that 
inside $\mathcal M_4^{GM}$ there is 
 a countable union $\bigcup_d \mathcal{GM}_d 
 $
 of (not necessarily irreducible) hypersurfaces
parametrizing \emph{Hodge-special} Gushel-Mukai fourfolds, that is, 
fourfolds that contain a surface whose cohomology class does not come from the Grassmannian $\mathbb{G}(1,4)$.
The index $d$ is called the \emph{discriminant} of the fourfold and it runs over all positive integers 
congruent to $0,2$, or $4$ modulo $8$ (see \cite{DIM}).
However, as far as the authors know, explicit geometric descriptions of Hodge-special Gushel-Mukai fourfolds in $\mathcal{GM}_d$ are unknown for $d>12$. 
In Theorem~\ref{mainthm1}, we shall provide such a description when $d=20$.

As in the case of cubic fourfolds, all Gushel-Mukai fourfolds are unirational. Some rational examples are classical and easy to construct, but no examples have yet been proved to be irrational.
Furthermore, there are values of the discriminant $d$ such that a fourfold in $\mathcal{GM}_d$ admits an \emph{associated K3 surface} of degree $d$. For instance, this occurs 
for $d=10$ and $d=20$.
The hypersurface $\mathcal{GM}_{10}$ has two irreducible components,  
and the general fourfold in each of these two components is rational (see \cite[Propositions~7.4 and 7.7]{DIM} and Examples~\ref{exa1} and \ref{exa2}).
Some of these fourfolds were already studied by Roth in \cite{Roth1949}.
As far as the authors know, there were no other known examples of rational Gushel-Mukai fourfolds. In Theorem~\ref{RatGM},
we shall provide new examples of rational Gushel-Mukai fourfolds that belong to $\mathcal{GM}_{20}$. 

A classical and still open question in algebraic geometry 
is the rationality of smooth cubic hypersurfaces in $\PP^5$ (cubic fourfolds for short). 
An important conjecture, known as \emph{Kuznetsov's Conjecture} (see \cite{AT, kuz4fold, kuz2, Levico}) asserts that a cubic fourfold is rational if and only if it admits an associated K3 surface in the sense of Hassett/Kuznetsov.
This condition can be expressed by saying that the rational cubic fourfolds are parametrized by a countable union $\bigcup_d \mathcal C_d$ of irreducible  hypersurfaces inside the $20$-dimensional coarse moduli space of cubic fourfolds, where $d$ runs over the so-called \emph{admissible values} (the first ones are $d=14,26,38,42,62$).
The rationality of cubic fourfolds in $\mathcal C_{14}$ was proved by Fano in \cite{Fano} (see also \cite{BRS}), while rationality in the case of $\mathcal C_{26}$ and $\mathcal C_{38}$ was proved in \cite{RS1}. Very recently, in \cite{RS3}, rationality was also proved in the case of $\mathcal C_{42}$.
The proof of this last result shows a close relationship between cubic fourfolds in $\mathcal C_{42}$ and the Gushel-Mukai fourfolds in $\mathcal{GM}_{20}$ constructed in this paper.
This beautiful geometry was discovered with the help of \emph{Macaulay2} \cite{macaulay2}.

\subsection*{Acknowledgements}
This work started during the \emph{Macaulay2} workshop 
held at the Saarland University in September 2019.
We would like to thank the organizers for bringing us together as well as Hoang Le Truong. 
We also wish to thank Francesco Russo and Olivier Debarre for very relevant suggestions.

\section{Generality on Gushel-Mukai fourfolds}\label{generalitiesGM}
In this section, we recall some general facts about Gushel-Mukai fourfolds 
which were proved in \cite{DIM} (see also \cite{DK1,DK2,DK3}).

A Gushel-Mukai fourfold $X\subset \PP^8$, GM fourfold for short,
is a degree-$10$ Fano fourfold with $\mathrm{Pic}(X) = \mathbb{Z}[ \mathcal{O}_X(1)]$ 
and $K_X\in|\mathcal{O}_X(-2)|$.
Equivalently,
$X$ is  a quadratic section of a {$5$}-dimensional linear section 
of the cone in $\PP^{10}$ over the Grassmannian $\GG(1,4)\subset \PP^9$ of lines in $\PP^4$.
There are two types of GM fourfolds:
\begin{itemize}
 \item quadratic sections of hyperplane
sections of $\mathbb G(1,4)\subset\PP^9$ (\emph{Mukai or ordinary fourfolds, \cite{mukai-biregularclassification}});
\item double covers of $\mathbb G(1,4)\cap\PP^7$ branched along its intersection with a quadric 
(\emph{Gushel fourfolds}, \cite{Gu}). 
\end{itemize}
 There exists a $24$-dimensional coarse moduli space $\mathcal M_4^{GM}$ of GM fourfolds, where the locus of Gushel fourfolds is of codimension $2$. Moreover, we have a \emph{period map} $\mathfrak{p}:\mathcal M_4^{GM}\to\mathcal{D}$ to a $20$-dimensional quasi-projective variety $\mathcal D$,
 which is dominant with irreducible $4$-dimensional fibers (see \cite[Corollary 6.3]{DK3}). 

For a very general GM fourfold $[X]\in \mathcal M_4^{GM}$, the natural inclusion 
\begin{equation}\label{naturalInclusion}
A(\GG(1,4)) := H^4(\mathbb G(1,4),\mathbb Z)\cap H^{2,2}(\mathbb G(1,4))\subseteq A(X) := H^4(X,\mathbb Z)\cap H^{2,2}(X)
\end{equation}
of middle Hodge groups is an equality.
A GM fourfold $X$ is said to be \emph{Hodge-special} if the inclusion \eqref{naturalInclusion} is strict. This means that the fourfold $X$ contains a surface whose cohomology class ``does not come'' from the Grassmannian $\GG(1,4)$.
Hodge-special GM fourfolds are parametrized by a countable union of hypersurfaces 
$\bigcup_d \mathcal{GM}_d\subset \mathcal M_4^{GM}$, labelled 
by the positive integers $d\equiv 0,2$, or $4$ (mod $8$) (see \cite[Lemma~6.1]{DIM}). The image 
$\mathcal{D}_d=\mathfrak{p}(\mathcal{GM}_d)$ is a hypersurface in $\mathcal{D}$,
which is irreducible if $d\equiv 0$ (mod $4$), and has two irreducible components $\mathcal D_d'$ and $\mathcal D_d''$
if  $d\equiv 2$ (mod $8$) (see \cite[Corollary~6.3]{DIM}). The same holds true for $\mathcal{GM}_d$.

In some cases, the value of $d$ can be explicitly computed from the geometry of Hodge-special GM fourfolds (see \cite[Section~7]{DIM}). Indeed, let $X\subset \PP^8$ be an ordinary  GM fourfold containing a smooth surface $S$ such that $[S]\in A(X)\setminus A(\mathbb G(1,4))$.
We may write $[S]=a\sigma_{3,1}+b\sigma_{2,2}$ in terms of Schubert cycles in $\mathbb G(1,4)$ for some integers $a$ and $b$. 
We then have $[X]\in \mathcal{GM}_d$, where $d$ is the absolute value of the determinant (or {\it discriminant}) of the intersection matrix 
in the basis $(\sigma_{1,1|X}, \sigma_{2|X}-\sigma_{1,1|X}, [S])$. 
That is 
\begin{equation}\label{discriminant} 
d=\left|\det  \begin{pmatrix}
     2&0&b\\
     0&2&a-b\\
     b&a-b&(S)_X^2\end{pmatrix}\right| = \left|
     4 (S)_X^2-2(b^2+(a-b)^2)\right|, 
\end{equation}
where 
\begin{equation}\label{doublepoints}
(S)_X^2=3a+4b+2K_S\cdot \sigma_{1|S}+2K_S^2-12\chi(\mathcal O_S).
\end{equation}

For some values of $d$, the non-special cohomology of the GM fourfold $[X]\in \mathcal{GM}_d$
looks like the primitive cohomology of a K3 surface. In this case, as in the case of cubic fourfolds,
one says that $X$ has an associated K3 surface. The first values of $d$ that satisfy 
the condition for the existence of an associated K3 surface are: $2$, $4$, $10$, $20$, $26$, $34$.
We refer to \cite[Section~6.2]{DIM} for precise definitions and results.

In Examples~\ref{exa1} and \ref{exa2} below, we recall the known examples of rational GM fourfolds,
which all have discriminant $10$.
In Section~\ref{second}, 
we shall construct rational GM fourfolds of discriminant $20$.
\begin{example}\label{exa1}
  A \emph{$\tau$-quadric} surface in $\mathbb{G}(1,4)$ is a linear section of 
  $\mathbb{G}(1,3)\subset\mathbb{G}(1,4)$; its class is $\sigma_{1}^2\cdot \sigma_{1,1} = \sigma_{3,1}+ \sigma_{2,2}$.
 In \cite[Proposition~7.4]{DIM}, it was proved that the closure $D_{10}'\subset \mathcal M_4^{GM}$ of the family of fourfolds containing a $\tau$-quadric surface is the irreducible hypersurface $\mathfrak{p}^{-1}(\mathcal{D}_{10}')$, and that the general member of $D_{10}'$ is rational.
 Furthermore, they are all rational by \cite{KontsevichTschinkelInventiones} or \cite[Theorem 4.15]{DK1}.

 In \cite[Theorem~5.3]{RS3}, a different description of $D_{10}'$ and another proof of the rationality of its general member were given.

The rationality for a general fourfold $ [X] \in D_{10}' $ 
also follows from the fact that
a $ \tau $-quadric surface $ S $, inside the unique del Pezzo fivefold $ Y
\subset \PP^8 $ 
containing $ X $, admits \emph{a congruence of $ 1 $-secant lines}, that is, through
the general point of $ Y $, there passes just one line contained in $ Y $ which intersects $ S $.
 \end{example}

\begin{example} \label{exa2}
A quintic del Pezzo surface is a two-dimensional linear section of $\mathbb{G}(1,4)$;
its class is $\sigma_{1}^4 = 3\sigma_{3,1} + 2 \sigma_{2,2}$.
 In \cite[Proposition~7.7]{DIM}, it was proved that the 
 closure $D_{10}''\subset \mathcal M_4^{GM}$ of the family 
  of fourfolds containing a quintic del Pezzo surface is the irreducible hypersurface
  $\mathfrak{p}^{-1}(\mathcal{D}_{10}'')$. 

The proof of the rationality of a general fourfold 
 $[X]\in D_{10}''$ is very classical. 
 Indeed in \cite{Roth1949}, Roth remarked that the projection from the linear span of a quintic del Pezzo surface  contained in $ X $ induces a dominant map
 \[ \pi : X\dashrightarrow \PP^2 \]
 whose generic fibre is a quintic del Pezzo surface.
 By a result of Enriques (see \cite{Enr,EinSh}),
 a quintic del Pezzo surface defined over an infinite field $ K $ is $ K $-rational. Thus, 
 the fibration $ \pi $ admits a rational section and $ X $ is rational.
\end{example}

\section{A Hodge-special family of Gushel-Mukai fourfolds}\label{second}
Let $S\subset\PP^8$ be 
the image of $\PP^2$ via the linear system of quartic curves through three simple points and one double point in general position.
Then $S$ is a smooth surface of degree $9$ and sectional genus $2$ cut out in $\PP^8$ by $19$ quadrics.

\begin{lemma}\label{surfInG14} Let $S\subset\PP^8$ be a rational surface of degree $9$ and sectional genus $2$ as above.
Then $S$ can be embedded in a smooth del Pezzo fivefold $Y=\GG(1,4)\cap \PP^8$ 
such that 
in the Chow ring of $\GG(1,4)$, we have 
 \begin{equation}\label{classAB}
 [S]=6\,\sigma_{3,1} + 3\, \sigma_{2,2} .
 \end{equation}
 Moreover, there exists an irreducible component of the Hilbert scheme parameterizing such surfaces in $Y$ 
 which is generically smooth of dimension $25$.
\end{lemma}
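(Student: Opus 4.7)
The plan addresses the three assertions of the lemma in order, working with the abstract model $S = \widetilde{\PP^2}$, the blow-up of $\PP^2$ at four general points (one with multiplicity $2$), polarized by $H = 4L - 2E_0 - E_1 - E_2 - E_3$; a direct computation yields $H^2 = 9$, $H\cdot K_S = -7$, $K_S^2 = 5$, $\chi(\mathcal O_S) = 1$, consistent with degree $9$ and sectional genus $2$. A morphism $\phi\colon S\to \GG(1,4)$ with $\phi^*\mathcal O_{\GG}(1) = \mathcal O_S(H)$ is equivalent to the data of a globally generated rank-$2$ vector bundle $E$ on $S$ with $\det E = \mathcal O_S(1)$ together with a $5$-dimensional subspace $V\subset H^0(S,E)$ spanning $E$ fiberwise. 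Since $S$ is linearly normal in $\PP^8$, the image $\phi(S)$ automatically spans a hyperplane in $\PP^9$ as soon as $\phi$ is an embedding, so the existence of the desired embedding $S\hookrightarrow Y = \GG(1,4)\cap \PP^8$ with smooth $Y$ reduces to producing one such pair $(E,V)$. I would do this explicitly in \emph{Macaulay2} and verify the required properties by direct computation.

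For the class, write $[S] = a\,\sigma_{3,1} + b\,\sigma_{2,2}$, so that $a+b = \deg S = 9$ is automatic. Using the Schubert pairings $\sigma_{1,1}\cdot\sigma_{3,1} = 0$ and $\sigma_{1,1}\cdot\sigma_{2,2} = 1$, one has $b = [S]\cdot\sigma_{1,1} = c_2(\phi^*\mathcal U) = c_2(E)$. On the explicit $E$ constructed above, a Chern class computation on $\widetilde{\PP^2}$ returns $c_2(E) = 3$, whence $a = 6$. Alternatively, one can pick a GM fourfold $X\subset Y$ containing $S$, evaluate $(S)_X^2$ on the explicit model, and then solve \eqref{discriminant}--\eqref{doublepoints} for the integers $a,b$ subject to $a+b=9$.

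For the dimension, let $N = N_{S/Y}$. From the tangent sequence $0\to T_Y\to T_{\GG(1,4)}|_Y\to \mathcal O_Y(1)\to 0$ restricted to $S$, combined with the identification $T_{\GG(1,4)} = \mathcal U^\vee\otimes\mathcal Q$ and Hirzebruch--Riemann--Roch on $S$, one computes $\chi(T_{\GG(1,4)}|_S) = 34$ and $\chi(\mathcal O_S(1)) = 9$, giving $\chi(T_Y|_S) = 25$. Together with the normal sequence $0\to T_S\to T_Y|_S\to N\to 0$ and $\chi(T_S) = 2K_S^2 - 10\chi(\mathcal O_S) = 0$, we conclude $\chi(N) = 25$, matching the claimed dimension. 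The hard part is confirming $H^1(S,N) = 0$, so that the Hilbert scheme is smooth of dimension $25$ at $[S]$; no obvious vanishing theorem applies, so I would verify this by a direct cohomology computation in \emph{Macaulay2} on the explicit $S$ above, after which semicontinuity propagates smoothness and the expected dimension to the generic point of the irreducible component of $\mathrm{Hilb}(Y)$ containing $[S]$.
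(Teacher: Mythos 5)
Your proposal is correct and follows essentially the same route as the paper: the existence of the embedding, the value $c_2(E)=3$ giving the class $6\,\sigma_{3,1}+3\,\sigma_{2,2}$, and the vanishing $H^1(N_{S,Y})=0$ are all ultimately certified by an explicit \emph{Macaulay2} example, which is exactly how the paper's proof proceeds. Your Hirzebruch--Riemann--Roch computation $\chi(N_{S,Y})=25$ is a nice independent consistency check that the paper omits; just note that to conclude $h^0(N_{S,Y})=25$ from it you also need $h^2(N_{S,Y})=0$ (or compute $h^0$ directly, as the paper does), which your explicit computation would deliver in any case.
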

\begin{proof}
 Using \emph{Macaulay2} \cite{macaulay2} (see Section~\ref{computations}), we constructed a specific example of a surface $S\subset \PP^8$ as above  
which is embedded in a del Pezzo fivefold $Y\subset\PP^8$ and satisfies \eqref{classAB}.
Moreover we verified in our example that 
$h^0(N_{S,Y})=25$ and $h^1(N_{S,Y})=0$. Thus, $[S]$ is a smooth point in the corresponding Hilbert scheme 
$\mathrm{Hilb}_Y^{\chi(\mathcal O_S(t))}$
of subschemes of $Y$,
and the unique irreducible component of $\mathrm{Hilb}_Y^{\chi(\mathcal O_S(t))}$ containing $[S]$ has dimension~$25$.
\end{proof}

\begin{remark}\label{rem0}
After our construction, in a preliminary version of this paper, of an explicit example of a surface 
as in Lemma~\ref{surfInG14}, \cite[Section~4]{RS3} provided an explicit geometric description of an irreducible $25$-dimensional family 
of these surfaces inside a del Pezzo fivefold, confirming the claim of Lemma~\ref{surfInG14}.
\end{remark}

\begin{theorem}\label{mainthm1}
Inside $\mathcal M_4^{GM}$, the closure  $D_{20}$
of the family of GM fourfolds containing a surface $S\subset\PP^8$ as in Lemma~\ref{surfInG14} is the irreducible hypersurface ${\mathfrak p}^{-1}(\mathcal{D}_{20})$.
\end{theorem}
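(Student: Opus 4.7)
The plan is to identify $D_{20}$ with $\mathfrak p^{-1}(\mathcal D_{20})$ by exhibiting $D_{20}$ as an irreducible subvariety of dimension $23$ contained in it. Since $20\equiv 4\pmod 8$ and the generic fibre of $\mathfrak p$ is irreducible of dimension $4$, the hypersurface $\mathfrak p^{-1}(\mathcal D_{20})$ is itself irreducible of dimension~$23$; equality of two such irreducible varieties of the same dimension then gives the theorem.

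First I would verify the discriminant. Realising $S$ as the blow-up of $\PP^2$ at four points $p_1,p_2,p_3,p_4$ with hyperplane class $H=4L-E_1-E_2-E_3-2E_4$, a routine intersection-theoretic computation on the blow-up yields $\chi(\mathcal{O}_S)=1$, $K_S^2=5$, and $K_S\cdot H=-7$. Plugging these together with $(a,b)=(6,3)$ from Lemma~\ref{surfInG14} into \eqref{doublepoints} gives $(S)_X^2=18+12-14+10-12=14$, and then \eqref{discriminant} yields
\[ d=|4\cdot 14-2(3^2+3^2)|=20.\]
Hence $D_{20}\subseteq \mathfrak p^{-1}(\mathcal D_{20})$.

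Next, I would show that $D_{20}$ is irreducible of dimension $23$. Irreducibility is built into the construction: the surfaces $S$ of the given type are parametrised by an irreducible rational variety (essentially four points in $\PP^2$ plus projective embedding data), which by Lemma~\ref{surfInG14} maps to the $25$-dimensional smooth component of $\mathrm{Hilb}_Y$, and since smooth hyperplane sections of $\GG(1,4)$ form a single $\mathrm{PGL}_5$-orbit, varying $Y$ preserves irreducibility of the total family. For the dimension, fix $\GG(1,4)\subset\PP^9$ and count triples $(Y,S,X)$: $9$ parameters for the hyperplane cutting out $Y$, $25$ for $S\subset Y$, and $h^0(\mathcal{I}_{S,Y}(2))-1$ for the quadric cutting out $X$. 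From the exact sequence
\[ 0\to H^0(\mathcal{I}_{Y,\PP^8}(2))\to H^0(\mathcal{I}_{S,\PP^8}(2))\to H^0(\mathcal{I}_{S,Y}(2))\to 0,\]
together with $h^0(\mathcal{I}_{S,\PP^8}(2))=19$ (since $S$ is cut out by $19$ quadrics) and $h^0(\mathcal{I}_{Y,\PP^8}(2))=5$ (the Pl\"ucker relations), one obtains $h^0(\mathcal{I}_{S,Y}(2))=14$. The resulting $9+25+13=47$-dimensional family, modded out by the $24$-dimensional $\mathrm{PGL}_5$-action, has image of dimension $23$ in $\mathcal M_4^{GM}$, provided that a general $X\in D_{20}$ contains only finitely many such surfaces $S$.

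The main obstacle is this last finiteness assertion: one must ensure that the projection $\{(S,X):S\subset X\}\to D_{20}$ has zero-dimensional generic fibre, i.e.\ that $[S]$ is isolated in the relevant component of $\mathrm{Hilb}_X$. This I would verify numerically on the explicit \emph{Macaulay2} example of Lemma~\ref{surfInG14} by computing $h^i(N_{S,X})$ directly. Once this is in place, $D_{20}$ is an irreducible $23$-dimensional subvariety of the irreducible $23$-dimensional hypersurface $\mathfrak p^{-1}(\mathcal D_{20})$, and the two must coincide.
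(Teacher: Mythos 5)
Your proposal is correct and follows essentially the same route as the paper: the same discriminant computation via \eqref{discriminant} and \eqref{doublepoints}, the same dimension count ($25$ parameters for $S$ plus $13$ for the quadric through $S$ in $Y$), and the same reliance on a \emph{Macaulay2} verification of $H^0(N_{S,X})=0$ to get generic finiteness of the map from the incidence variety to the space of fourfolds. The only cosmetic difference is that the paper fixes $Y$ and works inside the $39$-dimensional space $\mathcal{GM}_Y$ of quadratic sections (which dominates $\mathcal M_4^{GM}$ since smooth hyperplane sections of $\GG(1,4)$ are projectively equivalent), whereas you let $Y$ vary and quotient by $\mathrm{PGL}_5$ at the end; the two counts agree.
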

\begin{proof}
Let $Y=\GG(1,4)\cap\PP^8$ be a fixed smooth del Pezzo fivefold and 
let $\mathcal S$ be the $25$-dimensional irreducible family of rational surfaces 
$S\subset Y$ of degree $9$ and sectional genus $2$ described in Lemma~\ref{surfInG14}.
Let $\mathcal{GM}_{Y} = \mathbb{P}(H^0(\mathcal O_{Y}(2)))$ denote the family of GM fourfolds contained in $Y$, that is, the family of quadratic sections 
of $Y$. The dimension of $\mathcal{GM}_{Y}$ is $ h^0(\mathcal O_{\PP^8}(2)) - h^0(\mathcal I_{Y,\PP^8}(2)) - 1 = 39$.

Consider the incidence correspondence
$$I=\overline{\{([S], [X])\;:\; S\subset X \subset Y \}}\subset \mathcal S\times\mathcal{GM}_{Y},$$ and let
$$\xymatrix{
 &I\ar[dl]_{p_1}\ar[dr]^{p_2}&&\\
\mathcal S&&\mathcal{GM}_{Y}&}
$$
be the two natural projections.
Then $p_1$ is a surjective morphism and, for $[S]\in \mathcal S$ general, 
the fibre $p_1^{-1}([S]) \simeq \PP(H^0(\mathcal I_{S,Y}(2)))$ is irreducible 
of dimension $h^0(\mathcal I_{S,\PP^8}(2)) - h^0(\mathcal I_{Y,\PP^8}(2)) - 1 = 13$.
It follows that $I$ has a unique irreducible component $I^0$ that dominates $\mathcal S$ and that component has dimension $25 + 13 = 38$.

Using \emph{Macaulay2} (see \cite{M2files}), we verified in a specific example of a GM fourfold $X$ 
containing a surface $[S]\in \mathcal S$ 
that $H^0(N_{S,X}) = 0$. By semicontinuity, we deduce that $p_2$ is a generically finite morphism onto its image and that $p_2(I^0)$ has dimension $38$. It is therefore a hypersurface in $\mathcal{GM}_Y$. Since all smooth hyperplane sections of the Grassmannian $\mathbb{G}(1,4)\subset\PP^9$  are projectively equivalent, $\mathcal{GM}_Y$ dominates $\mathcal M_4^{GM}$ and the fourfolds $X$ that we have constructed form an irreducible hypersurface in $\mathcal M_4^{GM}$.

Finally, by applying \eqref{discriminant} and \eqref{doublepoints}, we get 
that a general such $[X]$ 
lies in $\mathfrak{p}^{-1}(\mathcal{D}_{20})$. This proves the theorem.
\end{proof}

\begin{theorem}\label{RatGM}
Every GM fourfold belonging to the family $D_{20}$ described in Theorem~\ref{mainthm1} is rational.
\end{theorem}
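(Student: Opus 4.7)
The plan is to adapt the \emph{congruence method} of Example~\ref{exa1}. I will exhibit inside the del Pezzo fivefold $Y = \GG(1,4)\cap\PP^8$ a four-dimensional family $\mathcal{F}$ of rational curves of some degree $d$, each meeting $S$ in exactly $2d-1$ points, with the property that through a general point of $Y$ passes exactly one curve of $\mathcal{F}$. Granted such a congruence, for a GM fourfold $X\subset Y$ containing $S$ a general $C\in\mathcal{F}$ meets $X$ in $2d$ points by B\'ezout (since $X$ is a quadric section of $Y$), of which $2d-1$ lie on $S\subset X$; the remaining residual point then provides a birational inverse to the map $\mu\colon X\dashrightarrow\mathcal{F}$ sending $p\in X$ to the unique curve of $\mathcal{F}$ through $p$. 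Thus the rationality of $X$ reduces to the rationality of $\mathcal{F}$.

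The first step is to pin down the correct numerical type of the curves. A general line of $Y$ is disjoint from $S$ (the locus of secant lines has the wrong codimension in the variety of lines of $Y$), so the congruence must consist of curves of degree $d\ge 2$. A dimension count for rational curves of degree $d$ in $Y$ constrained to meet $S$ in $2d-1$ general points singles out the minimal $d$ for which the expected dimension of the resulting family in $Y$ equals four. Using \emph{Macaulay2} on the explicit example of Lemma~\ref{surfInG14}, I would then verify in that specific case both the predicted dimension and the essential congruence property that exactly one such curve passes through a general point of $Y$.

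The second step is to establish rationality of the parameter space $\mathcal{F}$. Since $\mathcal{F}$ parameterizes rational curves in the homogeneous variety $Y$ incident to the rational surface $S$, one expects $\mathcal{F}$ to be birational to a projective bundle over a rational variety built from $S$ (for instance, an open subset of a symmetric product or of a Hilbert scheme of $S$); this can also be checked computationally on the explicit example. Combined with the congruence property, this yields rationality for a general $[X]\in D_{20}$. The extension to \emph{every} member of $D_{20}$ then follows from the specialization theorem of Kontsevich-Tschinkel \cite{KontsevichTschinkelInventiones}, as in Example~\ref{exa1}, since $D_{20}$ is irreducible and parameterizes smooth fourfolds.

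The main obstacle is the first step: identifying the correct degree of the congruence and proving that through a general point of $Y$ passes exactly one $(2d-1)$-secant curve of degree $d$ to $S$. I expect this to be the creative and computational heart of the argument, carried out by an explicit construction in \emph{Macaulay2} on the example of Lemma~\ref{surfInG14}; once verified there, the congruence property propagates to the general member of $D_{20}$ by semicontinuity, and Kontsevich-Tschinkel then takes care of the remaining special members.
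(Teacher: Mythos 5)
Your proposal is correct and follows essentially the same route as the paper: the paper's proof exhibits precisely such a congruence with $d=2$, namely a congruence of $3$-secant conics to $S$ inside $Y$, realized as the fibres of the dominant map $\psi\colon Y\dashrightarrow\PP^4$ given by cubic hypersurfaces with double points along $S$ (so that your parameter space $\mathcal F$ is simply $\PP^4$ and its rationality is automatic), with the congruence property checked by \emph{Macaulay2} and the extension to every member of $D_{20}$ obtained from \cite{KontsevichTschinkelInventiones} or \cite[Theorem 4.15]{DK1}, exactly as you indicate. The only caveat is your aside on lines: the correct reason $d=1$ fails is that through a general point of $Y$ there pass six $1$-secant lines to $S$ (not one), not that a general line of $Y$ misses $S$.
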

\begin{proof}
Let $Y\subset\PP^8$ be a del Pezzo fivefold and let $S\subset Y$ 
 be a general rational surface of degree $9$ and sectional genus $2$ 
belonging to the $25$-dimensional family described in Lemma~\ref{surfInG14} and Remark~\ref{rem0}.

 The restriction to  $Y$ of 
 the linear system of cubic hypersurfaces
with double points along $S$ gives a dominant rational map 
\begin{equation*} 
 \psi:Y\dashrightarrow \PP^4
\end{equation*}
whose general fibre is an irreducible conic curve which intersects $S$ at three points.
Thus $S$ admits inside $Y$ a \emph{congruence of $3$-secant conic curves}. 
This implies that the restriction of $\psi$ to a general GM fourfold $X$ 
containing $S$ and contained in $Y$ 
is a birational map to $\PP^4$.

The existence of the congruence of $3$-secant conics
can be also verified as follows. The linear system of quadrics through $S$ 
induces a birational map 
\[
\phi:Y\dashrightarrow Z\subset \PP^{13}
\]
 onto a fivefold $Z$ of degree $33$ and cut out by $21$ quadrics.
 Let $p\in Y$ be a general point. Then one sees that through $\phi(p)$ there pass 
 $7$ lines contained in $Z$. Of these, $6$ are the images of the lines passing through 
 $p$ and which intersect $S$, while the remaining line come from a single $3$-secant conic 
 to $S$ passing through $p$.

The claim about the rationality of \emph{every} $[X]\in D_{20}$ follows 
from the rationality of a general $[X]\in D_{20}$ and from
the main result in \cite{KontsevichTschinkelInventiones} or from \cite[Theorem 4.15]{DK1}.
\end{proof}

\begin{remark}
 The inverse map of the birational map $\psi:X\dashrightarrow \PP^4$ 
 described in the proof of Theorem~\ref{RatGM} is defined 
 by the linear system of hypersurfaces of degree $9$ 
 having double points along 
 an 
 internal projection to $\PP^4$ 
 of a smooth surface $T\subset \PP^5$ of degree $11$ and sectional genus $6$
 cut out by $9$ cubics. This surface $T$ is  
 an internal triple projection of 
 a smooth minimal K3 surface of degree $20$ and  genus $11$ in $\PP^{11}$.
 
 Actually, this was the starting point for this work.
 In fact, from the results of \cite{RS3}, 
 we suspected that a triple internal projection of a 
 minimal K3 surface of degree $20$ and genus $11$ 
 could be related to a GM fourfold of discriminant~$20$.
\end{remark}

\section{Explicit computations}\label{computations}
In the proof of Lemma~\ref{surfInG14}, we claimed that 
there exists an example of a rational surface $S\subset\PP^8$ of degree $9$ and sectional genus $2$  
which is also embedded in $\GG(1,4)$ and satisfies $[S]=6\,\sigma_{3,1} + 3\, \sigma_{2,2}$. In an ancillary file (see \cite{M2files}), we provide the explicit homogeneous 
ideal of such a surface which contains the ideal generated by the Pl\"{u}cker relations of $\GG(1,4)$. The class $[S]$ in terms 
of the Schubert cycles $\sigma_{3,1}$ and $\sigma_{2,2}$ can be easily calculated using, for instance, the \emph{Macaulay2} package \emph{SpecialFanoFourfolds}.

In the following, we explain the main steps of the procedure we followed to construct the surface in $\GG(1,4)$.
We start by taking a general nodal hyperplane section 
of a smooth Fano threefold of degree $22$ and sectional genus $12$ in $\mathbb{P}^{13}$ (see \cite{Mukai1983,schreyer_2001}).
The projection of this surface from its node yields 
a smooth K3 surface $T\subset\PP^{11}$ of degree $20$ and sectional genus $11$ which contains a conic 
(see \cite{Kapustka_2018}).
Then we take a general triple projection of $T$ in $\PP^5$, which 
  is a smooth surface of degree $11$ and sectional genus $8$ (this follows  from 
 \cite[Proposition 4.1]{Voisin} and \cite[Theorem 10]{FS} in the case when the K3 surface $T$ is general).
 Let $T'\subset \PP^4$ be a general internal projection of this surface in $\PP^5$. Then 
 $T'$ is a singular surface of degree $10$ and sectional genus $8$, cut out by 
 $13$ quintics. The linear system of hypersurfaces of degree $9$ having double points along $T'$ 
 gives a birational map 
 $
 \eta: \PP^4\dashrightarrow X\subset\PP^8
 $
onto a GM fourfold $X$, whose 
 inverse map is defined by the restriction to $X$ of the linear system of cubic hypersurfaces having double points 
 along a smooth surface $S\subset X$ of degree $9$ and sectional genus $2$.
Finally, to determine explicitly the surface $S$, one can exploit the fact that 
 the general quintic hypersurface corresponds via $\eta$ to the general quadric hypersurface (inside $X$) containing $S$.
 Indeed,
behind the scenes, we have an occurrence of a flop, similar to the 
\emph{Trisecant Flop} considered in \cite{RS3}. In particular, 
we have a commutative diagram 
 $$\xymatrix{
 &M&&\\
\PP^4 \ar@{-->}[ur]^{m_1} \ar@{-->}[rr]^{\eta}&& X \ar@{-->}[ul]_{m_2}&}
$$
 where 
 $m_1$ and $m_2$ are the birational maps defined, respectively, 
 by the linear system of quintics through $T'$ and 
 by the linear system of quadrics through $S$. Moreover, $M$ is a fourfold of degree $33$ in $\PP^{12}$ cut out by $21$ quadrics. 
 
 

\begin{thebibliography}{DK18b}

\bibitem[AT14]{AT}
N.~Addington and R.~Thomas, \emph{Hodge theory and derived categories of cubic
  fourfolds}, Duke Math. J. \textbf{163} (2014), no.~10, 1886--1927.

\bibitem[BRS19]{BRS}
M.~Bolognesi, F.~Russo, and G.~Staglian{\`o}, \emph{Some loci of rational cubic
  fourfolds}, Math. Ann. \textbf{373} (2019), no.~1, 165--190.

\bibitem[DIM15]{DIM}
O.~Debarre, A.~Iliev, and L.~Manivel, \emph{Special prime {F}ano fourfolds of
  degree $10$ and index $2$}, Recent Advances in Algebraic Geometry: A Volume
  in Honor of {R}ob {L}azarsfeld’s 60th Birthday (C.~Hacon,
  M.~Musta\c{t}\u{a}, and M.~Popa, eds.), London Math. Soc. Lecture Note Ser.,
  Cambridge Univ. Press, 2015, pp.~123--155.

\bibitem[DK18a]{DK1}
O.~Debarre and A.~Kuznetsov, \emph{{G}ushel--{M}ukai varieties: Classification
  and birationalities}, Algebr. Geom. \textbf{5} (2018), 15--76.

\bibitem[DK18b]{DK3}
\bysame, \emph{{G}ushel--{M}ukai varieties: moduli}, to appear in Internat. J.
  Math, preprint: \url{https://arxiv.org/abs/1812.09186}, 2018.

\bibitem[DK19]{DK2}
\bysame, \emph{Gushel--{M}ukai varieties: {L}inear spaces and periods}, Kyoto
  J. Math. \textbf{59} (2019), no.~4, 897--953.

\bibitem[Enr97]{Enr}
F.~Enriques, \emph{Sulla irrazionalit\`a da cui pu\`o farsi dipendere la
  resoluzione di un'equazione algebrica $f(x,y,z)=0$ con funzioni razionali di
  due parametri}, Math. Ann. \textbf{49} (1897), 1--23.

\bibitem[Fan43]{Fano}
G.~Fano, \emph{Sulle forme cubiche dello spazio a cinque dimensioni contenenti
  rigate razionali del $4^\circ$ ordine}, Comment. Math. Helv. \textbf{15}
  (1943), no.~1, 71--80.

\bibitem[FS19]{FS}
C.~Fontanari and E.~Sernesi, \emph{Non-surjective {G}aussian maps for singular
  curves on {K3} surfaces}, Collect. Math. \textbf{70} (2019), no.~1, 107--115.

\bibitem[GS19]{macaulay2}
D.~R. Grayson and M.~E. Stillman, \emph{{\sc Macaulay2} --- {A} software system
  for research in algebraic geometry (version 1.14)}, home page:
  \url{http://www.math.uiuc.edu/Macaulay2/}, 2019.

\bibitem[Gus82]{Gu}
N.~P. Gushel, \emph{{F}ano varieties of genus $6$ (in {R}ussian)}, Izv. Akad.
  Nauk {USSR} Ser. Mat. \textbf{46} (1982), no.~6, 1159--1174, English transl.:
  \emph{Math. {USSR}-Izv.} \textbf{21} 3 (1983), 445--459.

\bibitem[Has99]{Hassett}
B.~Hassett, \emph{Some rational cubic fourfolds}, J. Algebraic Geom. \textbf{8}
  (1999), no.~1, 103--114.

\bibitem[Has00]{Has00}
\bysame, \emph{Special cubic fourfolds}, Comp. Math. \textbf{120} (2000),
  no.~1, 1--23.

\bibitem[Has16]{Levico}
\bysame, \emph{Cubic fourfolds, {K3} surfaces, and rationality questions},
  Rationality Problems in Algebraic Geometry: Levico Terme, Italy 2015
  (R.~Pardini and G.~P. Pirola, eds.), Springer International Publishing, Cham,
  2016, pp.~29--66.

\bibitem[HS19]{M2files}
M.~Hoff and G.~Staglian{\`o}, \emph{ancillary \emph{{M}acaulay2} files},
  available at
  \url{https://www.math.uni-sb.de/ag/schreyer/index.php/people/researchers/74-michael-hahn},
  2019.

\bibitem[Kap18]{Kapustka_2018}
M.~Kapustka, \emph{Projections of {M}ukai varieties}, Math. Scand. \textbf{123}
  (2018), no.~2, 191--219.

\bibitem[KT19]{KontsevichTschinkelInventiones}
M.~Kontsevich and Y.~Tschinkel, \emph{Specialization of birational types},
  Invent. Math. \textbf{217} (2019), no.~2, 415--432.

\bibitem[Kuz10]{kuz4fold}
A.~Kuznetsov, \emph{Derived categories of cubic fourfolds}, Cohomological and
  Geometric Approaches to Rationality Problems, Progress in Mathematics, vol.
  282, Birkh\"{a}user Boston, 2010, pp.~219--243.

\bibitem[Kuz16]{kuz2}
\bysame, \emph{Derived categories view on rationality problems}, Rationality
  Problems in Algebraic Geometry: Levico Terme, Italy 2015 (R.~Pardini and
  G.~P. Pirola, eds.), Springer International Publishing, Cham, 2016,
  pp.~67--104.

\bibitem[MU83]{Mukai1983}
S.~Mukai and H.~Umemura, \emph{Minimal rational threefolds}, Lecture Notes in
  Mathematics, Springer Berlin Heidelberg, 1983, pp.~490--518.

\bibitem[Muk89]{mukai-biregularclassification}
S.~Mukai, \emph{Biregular classification of {F}ano 3-folds and {F}ano manifolds
  of coindex 3}, Proc. Natl. Acad. Sci. USA \textbf{86} (1989), no.~9,
  3000--3002.

\bibitem[Rot49]{Roth1949}
L.~Roth, \emph{Algebraic varieties with canonical curve sections}, Ann. Mat.
  Pura Appl. \textbf{29} (1949), no.~1, 91--97.

\bibitem[RS19a]{RS1}
F.~Russo and G.~Staglian{\`o}, \emph{Congruences of $5$-secant conics and the
  rationality of some admissible cubic fourfolds}, Duke Math. J. \textbf{168}
  (2019), no.~5, 849--865.

\bibitem[RS19b]{RS3}
\bysame, \emph{Trisecant {F}lops, their associated {K3} surfaces and the
  rationality of some {F}ano fourfolds}, preprint:
  \url{https://arxiv.org/abs/1909.01263}, 2019.

\bibitem[SB92]{EinSh}
N.~I. Shepherd-Barron, \emph{The rationality of quintic del {P}ezzo surfaces --
  a short proof}, Bull. Lond. Math. Soc. \textbf{24} (1992), no.~3, 249--250.

\bibitem[Sch01]{schreyer_2001}
F.-O. Schreyer, \emph{Geometry and algebra of prime {F}ano 3-folds of genus
  12}, Compos. Math. \textbf{127} (2001), no.~3, 297--319.

\bibitem[Voi19]{Voisin}
C.~Voisin, \emph{Segre classes of tautological bundles on {H}ilbert schemes of
  surfaces}, Algebr. Geom. \textbf{6} (2019), no.~2, 186--195.

\end{thebibliography}

\providecommand{\bysame}{\leavevmode\hbox to3em{\hrulefill}\thinspace}
\providecommand{\MR}{\relax\ifhmode\unskip\space\fi MR }
\providecommand{\MRhref}[2]{%
  \href{http://www.ams.org/mathscinet-getitem?mr=#1}{#2}
}
\providecommand{\href}[2]{#2}

\end{document}